\documentclass[10 pt, letterpaper]{article}
\usepackage[T1]{fontenc}
\usepackage{times}

\oddsidemargin 0.7cm
\topmargin -1.5cm
\textheight 22cm
\textwidth 15cm


\usepackage{xspace,eurosym,verbatim,enumitem}
\usepackage{amsmath,amssymb,bm,upgreek}
\usepackage{float,graphicx,epsfig,pstricks,pst-node,auto-pst-pdf,subfig}
\usepackage{ifpdf,cite}
\usepackage{multirow}


\psset{unit=1mm,framesep=10pt,fillstyle=none}  
\degrees[360]                                  
\psset{linewidth=0.9pt}                        
\providecommand{\mel}{\psset{linewidth=0.5pt}} 
\providecommand{\bol}{\psset{linewidth=1.1pt}} 


\newtheorem{theorem}{Theorem}
\newtheorem{lemma}[theorem]{Lemma}

\newtheorem{definition}[theorem]{Definition}
\newtheorem{remark}[theorem]{Remark}
\newtheorem{assumption}[theorem]{Assumption}

\newenvironment{proof}{\noindent\textit{Proof.}\rmfamily}{\hfill $\blacksquare$\vspace{2ex}}


\providecommand{\eg}{e.g.\@\xspace}
\providecommand{\ie}{i.e.\@\xspace}

\providecommand{\iid}{i.i.d.\@\xspace}
\providecommand{\st}{\text{s.t.}\enspace}
\providecommand{\pst}{\phantom{\text{s.t.}}\enspace}
\providecommand{\ef}{\enspace.}
\providecommand{\ec}{\enspace,}

\providecommand{\de}{\delta}
\providecommand{\ep}{\varepsilon}

\providecommand{\to}{\rightarrow}
\providecommand{\fa}{\forall\,\,} %
\providecommand{\tp}{^{\textrm{T}}} %
\providecommand{\co}{^{\textrm{C}}} %
\providecommand{\op}{^{\star}} %
\providecommand{\to}{\rightarrow}
\providecommand{\dm}{\,d}

\DeclareMathOperator{\lims}{lim\,sup} %
\DeclareMathOperator{\rnk}{rank}

\DeclareMathOperator{\Pb}{\mathbf{P}}
\DeclareMathOperator{\E}{\mathbf{E}}

\DeclareMathOperator{\If}{\mathbf{1}}
\DeclareMathOperator{\Id}{I}
\DeclareMathOperator{\B}{B}


\providecommand{\om}{\omega}
\providecommand{\Om}{\Omega}

\providecommand{\Vm}{V}
\providecommand{\Ab}{\bar{A}}
\providecommand{\Bb}{\bar{B}}
\providecommand{\Qb}{\bar{Q}}
\providecommand{\Phib}{\bar{\Phi}}

\providecommand{\CA}{\mathcal{A}}

\providecommand{\CF}{\mathcal{F}}

\providecommand{\CL}{\mathcal{L}}
\providecommand{\CN}{\mathcal{N}}
\providecommand{\CU}{\mathcal{U}}
\providecommand{\BN}{\mathbb{N}}
\providecommand{\BR}{\mathbb{R}}
\providecommand{\BRn}{\mathbb{R}^{n}}
\providecommand{\BRm}{\mathbb{R}^{m}}
\providecommand{\BU}{\mathbb{U}}

\providecommand{\BX}{\mathbb{X}}


\title{The Scenario Approach for Stochastic Model Predictive Control with Bounds on Closed-Loop Constraint Violations\thanks{This manuscript is the preprint of a paper submitted to Automatica and it is subject to Elsevier copyright. Elsevier maintains the sole rights of distribution or publication of the work in all forms and media. If accepted, the copy of record will be available at {\tt http://www.journals.elsevier.com/automatica/}.}}

\author{Georg Schildbach\thanks{Automatic Control Laboratory, Swiss Federal Institute of Technology Zurich (Physikstrasse 3, 8092 Zurich, Switzerland) {\tt schildbach|fagiano|morari@control.ee.ethz.ch}}
\and Lorenzo Fagiano\thanks{ABB Switzerland Ltd., Corporate Research Center (Segelhofstrasse 1, Baden-Daettwil, Switzerland) {\tt fagiano@control.ee.ethz.ch}}
 \and Christoph Frei\thanks{Mathematical and Statistical Sciences, University of Alberta (Edmonton, AB T6G 2G1, Canada) {\tt cfrei@ualberta.ca}} \and Manfred Morari$^{\dagger}$
}

\begin{document}

\maketitle



\begin{abstract}


Many practical applications of control require that constraints on the inputs and states of the system be respected, while optimizing some performance criterion. In the presence of model uncertainties or disturbances, for many control applications it suffices to keep the state constraints at least for a prescribed share of the time, as \eg in building climate control or load mitigation for wind turbines.
For such systems, a new control method of Scenario-Based Model Predictive Control (SCMPC) is presented in this paper. It optimizes the control inputs over a finite horizon, subject to robust constraint satisfaction under a finite number of random scenarios of the uncertainty and/or disturbances.
While previous approaches have shown to be conservative (\ie to stay far below the specified rate of constraint violations), the new method is the first to account for the special structure of the MPC problem in order to significantly reduce the number of scenarios. In combination with a new framework for interpreting the probabilistic constraints as average-in-time, rather than pointwise-in-time, the conservatism is eliminated.
The presented method retains the essential advantages of SCMPC, namely the reduced computational complexity and the handling of arbitrary probability distributions. It also allows for adopting sample-and-remove strategies, in order to trade performance against computational complexity.

\end{abstract}

\section{Introduction}\label{Sec:Intro}

Model Predictive Control (MPC) is a powerful approach for handling multi-variable control problems with constraints on the states and inputs. Its feedback control law can also incorporate feedforward information, \eg about the future course of references and/or disturbances, and the optimization of a performance criterion of interest.

Over the past two decades, the theory of linear and robust MPC has matured considerably \cite{Mayne:2000}. There are also widespread practical applications in diverse fields \cite{QinBadg:2003}. Yet many potentials of MPC are still not fully uncovered.

One active line of research is Stochastic MPC (SMPC), where the system dynamics are of a stochastic nature. They may be affected by additive disturbances \cite{BatinaEtAl:2002,CannonEtAl:2011,ChatEtAl:2011,CinqEtAl:2011,KouvEtAl:2010,LiEtAl:2002}, by random uncertainty in the system matrices \cite{CannonEtAl:2009a}, or both \cite{CannonEtAl:2009b,MunozEtAl:2005,PrimSung:2009,SchwNik:1999}. In this framework, a common objective is to minimize a cost function, while the system state is subject to chance constraints, \ie constraints that have to be satisfied only with a given probability.

Stochastic systems with chance constraints arise naturally in some applications, such as building climate control \cite{OldeEtAl:2012}, wind turbine control \cite{CannonEtAl:2009b}, or network traffic control \cite{YanBit:2005}. Alternatively, they can be considered as relaxations of robust control problems, in which the robust satisfaction of state constraints can be traded for an improved cost performance.

A major challenge in SMPC is the solution to chance-constrained finite-horizon optimal control problems (FHOCPs) in each sample time step. These correspond to non-convex stochastic programs, for which finding an exact solution is computationally intractable, except for very special cases \cite{KallMay:2011,Shapiro:2009}. Moreover, due to the multi-stage nature of these problems, it generally involves the computation of multi-variate convolution integrals \cite{CannonEtAl:2011}.

In order to obtain a tractable solution, various sample-based approximation approaches have been considered, \eg \cite{Batina:2004,BlackEtAl:2010,SkafBoyd:2009}. They share the significant advantage of coping with generic probability distributions, as long as a sufficient number of random samples (or `scenarios') can be obtained. The open-loop control laws can be approximated by sums of basis functions, as in the Q-design procedure proposed by \cite{SkafBoyd:2009}. However, these early approaches of Scenario-Based MPC (SCMPC) remain computationally demanding \cite{Batina:2004} and/or of a heuristic nature, \ie without specific guarantees on the satisfaction of the chance constraints \cite{BlackEtAl:2010,SkafBoyd:2009}.

More recent approaches \cite{CalFag:2013a,CalFag:2013b,Matusko:2012,PrandEtAl:2012,Schildi:2012,VayaEtAl:2012} are based on advances in the field of scenario-based optimization. However, these approaches share the drawback of being \emph{conservative} when applied in a receding horizon fashion, \ie the focus is either on obtaining a robust solution \cite{CalFag:2013a,CalFag:2013b,VayaEtAl:2012} or the chance constraints are over-satisfied by the closed loop system \cite{Matusko:2012,PrandEtAl:2012,Schildi:2012}. 

This conservatism of SCMPC represents a major practical issue, that is resolved by the contributions of this paper. In contrast to the previous results, the novel approach interprets the chance constraints as a time average, rather than pointwise-in-time with a high confidence, which is much less restrictive. Furthermore, the sample size is reduced by exploiting the structural properties of the finite-horizon optimal control problem \cite{SchildEtAl:2014}. The approach also allows for the presence of multiple simultaneous chance constraints on the state, and an a-posteriori removal of adverse samples for improving the controller performance \cite{Matusko:2012}.

In the most general setting, this paper considers linear systems with stochastic additive disturbances and uncertainty in the system matrices, which may only be known through a sufficient number of random samples. The computational complexity can be traded against performance of the controller by removing samples a-posteriori, starting from a simple convex linear or quadratic program and converging to the optimal SMPC solution in the limit.

The paper is organized as follows: Section \ref{Sec:ProbStat} presents a rigorous formulation of the optimal control problem that one would like to solve; Section \ref{Sec:SCMPC} describes how an approximated solution is obtained by SCMPC; Section \ref{Sec:Samples} develops the theoretical details, including the technical background and closed-loop properties; Section \ref{Sec:Exam} demonstrates the application of the method to a numerical example; and Section \ref{Sec:Conc} presents the main conclusions.

\section{Optimal Control Problem}\label{Sec:ProbStat}

Consider a discrete-time control system with a linear stochastic transition map
\begin{equation}\label{Equ:DynSystem}
	x_{t+1}=A(\de_{t})x_{t}+B(\de_{t})u_{t}+w(\de_{t})\ec\quad x_{0}=\bar{x}_{0}\ec
\end{equation}
for some fixed initial condition $\bar{x}_{0}\in\BRn$. The \emph{system matrix} $A(\de_{t})\in\BR^{n\times n}$ and the \emph{input matrix} $B(\de_{t})\in\BR^{n\times m}$ as well as the additive disturbance $w(\de_{t})\in\BRn$ are random, as they are (known) functions of a primal uncertainty $\de_{t}$. For notational simplicity, $\de_{t}$ comprises all uncertain influences on the system at time $t$.

\begin{assumption}[Uncertainty]\label{Ass:Uncertainty}
	(a) The uncertainties $\{\de_{0},\de_{1},...\}$, are independent and identically distributed
	(i.i.d.) random variables on a probability space $(\Delta,\Pb)$.
	(b) A `sufficient number' of \iid samples from $\de_{t}$ can be obtained, either empirically or
	by a random number generator.
\end{assumption}

The support set $\Delta$ of $\de_{t}$ and the probability measure $\Pb$ on $\Delta$ are entirely generic. In fact, $\Delta$ and $\Pb$ need not be known explicitly. The `sufficient number' of samples, which is required instead, will become concrete in later sections of the paper. Note that any issues arising from the definition of a $\sigma$-algebra on $(\Delta,\Pb)$ are glossed over in this paper, as they are unnecessarily technical. Instead, every relevant subset of $\Delta$ is assumed to be measurable.

The system \eqref{Equ:DynSystem} can be controlled by inputs $\{u_{0},u_{1},...\}$, to be chosen from a set of feasible inputs $\BU\subset\BRm$. Since the future evolution of the system \eqref{Equ:DynSystem} is uncertain, it is generally impractical to indicate all future inputs explicitly. Instead, each $u_{t}$ should be determined by a static feedback law
\begin{equation*}
    \psi:\BRn\to\BU\qquad\text{with}\qquad u_{t}=\psi(x_{t})\ec
\end{equation*}
based only on the current state of the system.

The optimal state feedback law $\psi$ should be determined in order to minimize the time-average of expected stage costs $\ell:\BRn\times\BRm\to\BR_{0+}$,
\begin{equation}\label{Equ:AvgCost}
    \frac{1}{T}\sum_{t=0}^{T-1}\E\bigl[\ell\bigl(x_{t},u_{t}\bigr)\bigr]\ef
\end{equation}
Each stage cost is taken in expectation $\E\bigl[\,\cdot\,\bigr]$, since its arguments $x_{t}$ and $u_{t}$ are random variables, being functions of $\{\de_{0},...,\de_{t-1}\}$. The time horizon $T$ is considered to be very large, yet it may not be precisely known at the point of the controller design.

The minimization of the cost is subject to keeping the state inside a state constraint set $\BX$ for a given fraction of all time steps. For many applications, the robust satisfaction of the state constraint (\ie $x_{t}\in\BX$ at all times $t$) is too restrictive for the choice of $\psi$, and results in a poor performance in terms of the cost function. This is especially true in cases where the lowest values of the cost function are achieved close to the boundary of $\BX$.  Moreover, it may be impossible to enforce if the support of $w(\de_{t})$ is unknown and possibly unbounded.

In order to make this more precise, let $M_{t}:=\If_{\BX\co}(x_{t+1})$ denote the random variable indicating that $x_{t+1}\notin\BX$, \ie $\If_{\BX\co}:\BRn\to\{0,1\}$ is the indicator function on the complement $\BX\co$ of $\BX$. The expected time-average of constraint violations should be upper bounded by some $\ep\in(0,0.5)$,
\begin{equation}\label{Equ:AvgViol}
    \E\bigl[\frac{1}{T}\sum_{t=0}^{T-1}M_{t}\bigr]\leq\ep\ef
\end{equation}

\begin{assumption}[Control Problem]\label{Ass:Control}
	(a) The state of the system can be measured at each time step $t$.
	(b) The set of \emph{feasible inputs} $\BU$ is bounded and convex.
	(c) The \emph{state constrained set} $\BX$ is convex.
	(d) The stage cost $\ell(\cdot,\cdot)$ is a convex function.
\end{assumption}

Assumption \ref{Ass:Control}(b) holds for most practical applications, and very large artificial bounds can always be introduced for input channels without natural bounds. Typical choices for the stage cost $\ell$ include
\begin{subequations}\label{Equ:StageCost}\begin{align}
    &\ell(\xi,\upsilon):=\bigl\|Q_\ell\xi\bigr\|_{1}+\bigl\|R_\ell\upsilon\bigr\|_{1}\ec\\
    \text{or}\quad
    &\ell(\xi,\upsilon):=\bigl\|Q_\ell\xi\bigr\|_\infty+\bigl\|R_\ell\upsilon\bigr\|_\infty\ec\\
    \text{or}\quad
    &\ell(\xi,\upsilon):=\bigl\|Q_\ell\xi\bigr\|_{2}^{2}+\bigl\|R_\ell\upsilon\bigr\|_{2}^{2}\ec
\end{align}\end{subequations}
where $Q_\ell\in\BR^{n\times n}$ and $R_\ell\in\BR^{m\times m}$ are positive semi-definite weighting matrices. Typical choices for the constraints $\BU$ and $\BX$ are polytopic or ellipsoidal sets.

Combining the previous discussions, the \emph{optimal control problem (OCP)} can be stated as follows:
\begin{subequations}\label{Equ:OCP}
    \begin{align}
    \min_{\psi}\quad&\frac{1}{T}\sum_{t=0}^{T-1}\E\bigl[\ell\bigl(x_{t},u_{t}\bigr)\bigr]\ec\\
    \st\quad& x_{t+1}=A(\de_{t})x_{t}+B(\de_{t})u_{t}+w(\de_{t})\ec\enspace x_{0}=\bar{x}_{0}
	    \quad \fa t=0,...,T-1\ec\\
    \pst\quad&\E\bigl[\frac{1}{T}\sum_{t=0}^{T-1}\If_{\BX\co}(x_{t})\bigr]\leq\ep\ec\\
    \pst\quad&\hspace*{0.1cm}u_{t}=\psi(x_{t})\quad\fa t=0,...,T-1\ef\hspace*{1.0cm}
	\end{align}
\end{subequations}
The equality constraints (\ref{Equ:OCP}b) are understood to be substituted recursively to eliminate all state variables $x_{0},x_{1},...,x_{T-1}$ from the problem. Thus only the state feedback law $\psi$ remains as a free variable in \eqref{Equ:OCP}.

\begin{remark}[Alternative Formulations]\label{Rem:Formulation}
    (a) Instead of the sum of expected values, the cost function (\ref{Equ:OCP}a) can also be
    defined as a desired quantile of the sum of discounted stage costs. Then the problem formulation
    corresponds to a minimization of the `value-at-risk', see \eg \cite{Shapiro:2009}.
    (b) Multiple chance constraints on the state $\BX_{j}$, each with an individual probability
    level $\ep_{j}$, can be included without further complications. A single chance constraint is
    considered here for notational simplicity.
\end{remark}

Many practical control problems can be cast in the general form of \eqref{Equ:OCP}. For example in building climate control \cite{OldeEtAl:2012}, the energy consumption of a building should be minimized, while its internal climate is subject to uncertain weather conditions and the occupancy of the building. The comfort range for the room temperatures may occasionally be violated without major harm to the system. Another example is wind turbine control \cite{CannonEtAl:2009b}, where the power efficiency of a wind turbine should be maximized, while its dynamics are subject to uncertain wind conditions. High stress levels in the blades must not occur too often, in order to achieve a desired fatigue life of the turbine.

\section{Scenario-Based Model Predictive Control}\label{Sec:SCMPC}

The OCP is generally intractable, as it involves an infinite-dimensional decision variable $\psi$ (the state feedback law) and a large number of constraints (growing with $T$). Therefore it is common to approximate it by various approaches, such as \emph{Model Predictive Control (MPC)}. 

\subsection{Stochastic Model Predictive Control (SMPC)}

The basic concept of MPC is to solve a tractable counterpart of \eqref{Equ:OCP} over a small horizon $N$ repeatedly at each time step. Only the first input of this solution is applied to the system \eqref{Equ:DynSystem}. In Stochastic MPC (SMPC), a \emph{Finite Horizon Optimal Control Problem (FHOCP)} is formulated by introducing chance constraints on the state:
\begin{subequations}\label{Equ:FHOCP}
    \begin{align}
    \min_{u_{0|t},...,u_{N-1|t}}\quad &\sum_{t=0}^{N-1}
	    \E\bigl[\ell\bigl(x_{i|t},u_{i|t}\bigr)\bigr]\ec\\
        \st\quad & x_{i+1|t}=A(\de_{t+i})x_{i|t}+B(\de_{t+i})u_{i|t}+w(\de_{t+i})\ec\,\,
	    x_{0|t}=x_{t}\quad\fa i=0,...,N-1\ec\\
        \pst\quad&\hspace*{0.24cm}\Pb\bigl[x_{i+1|t}\notin\BX\bigr]\leq\ep_{i}\quad\fa i=0,...,N-1\ec\\
        \pst\quad&\hspace*{0.32cm}u_{i|t}\in\BU\quad\fa i=0,...,N-1\ef
    \end{align}
\end{subequations}
Here $x_{i|t}$ and $u_{i|t}$ denote predictions and plans of the state and input variables made at time $t$, for $i$ steps into the future. The current measured state $x_{t}$ is introduced as an initial condition for the dynamics. The predicted states $x_{1|t},...,x_{N|t}$ are understood to be eliminated by recursive substitution of (\ref{Equ:FHOCP}b). Note that the predicted states are random by the influence of the uncertainties $\de_{t},...,\de_{t+N-1}$.

The \emph{probability levels} $\ep_{i}$ in the \emph{chance constraints} (\ref{Equ:FHOCP}c) usually coincide with $\ep$ from the OCP \cite{CinqEtAl:2011,OldeEtAl:2012,SchwNik:1999}, but they may generally differ \cite{YanBit:2005}. Some formulations also involve chance constraints over the entire horizon \cite{LiEtAl:2002,CannonEtAl:2009b}, or as a combination with robust constraints \cite{KouvEtAl:2010,CannonEtAl:2011}. Other alternatives of SMPC consider integrated chance constraints \cite{ChatEtAl:2011}, or constraints on the expectation of the state \cite{PrimSung:2009}.

\begin{remark}[Terminal Cost]\label{Rem:TermCost}
    An optional (convex) terminal cost $\ell_{f}:\BRn\to\BR_{0+}$ can be included in the FHOCP 
    \cite{Macie:2002,Mayne:2009}. In this case the term
    \begin{equation*}
        \E\bigl[\ell_{f}\bigl(x_{N|t}\bigr)\bigr]
    \end{equation*}
    would be added to the cost function (\ref{Equ:FHOCP}a).
\end{remark}

The state feedback law provided by SMPC is given by a receding horizon policy: the current state $x_{t}$ is substituted into (\ref{Equ:FHOCP}b), then the FHOCP is solved for an input sequence $\{u\op_{0|t},...,u\op_{N-1|t}\}$, and the current input is set to $u_{t}:=u\op_{0|t}$. This means that the FHOCP must be solved online at each time step $t$, using the current measurement of the state $x_{t}$.

However, the FHOCP is a stochastic program that remains difficult to solve, except for very special cases. In particular, the feasible set described by chance constraints is generally non-convex, despite of the convexity of $\BX$, and hard to determine explicitly. Hence a further approximation shall be made by scenario-based optimization.

\subsection{Scenario-Based Model Predictive Control (SCMPC)}

The basic idea of Scenario-Based MPC (SCMPC) is to compute an optimal finite-horizon input trajectory $\{u'_{0|t},...,u'_{N-1|t}\}$ that is feasible under $K$ of sampled `scenarios' of the uncertainty. Clearly, the scenario number $K$ has to be selected carefully in order to attain the desired properties of the controller. In this section, the basic setup of SCMPC is discussed, while the selection of a value for $K$ is deferred until Section \ref{Sec:Samples}.

More concretely, let $\de^{(1)}_{i|t},...,\de^{(K)}_{i|t}$ be \iid samples of $\de_{t+i}$, drawn at time $t\in\BN$ for the prediction steps $i=0,...,N-1$. For convenience, they are combined into \emph{full-horizon samples} $\om^{(k)}_{t}:=\{\de^{(k)}_{0|t},...,\de^{(k)}_{N-1|t}\}$, also called \emph{scenarios}. The \emph{Finite-Horizon Scenario Program (FHSCP)} then reads as follows:
\begin{subequations}\label{Equ:FHSCP}
	\begin{align}
	         \min_{u_{0|t},...,u_{N-1|t}}\quad &
         \sum_{k=1}^{K}\sum_{i=0}^{N-1}\ell\bigl(x^{(k)}_{i|t},u_{i|t}\bigr)\ec\\
	\st\quad & x^{(k)}_{i+1|t}=A(\de^{(k)}_{i|t})x^{(k)}_{i|t}+B(\de^{(k)}_{i|t})u_{i|t}
        +w(\de^{(k)}_{i|t})\c\,\, x^{(k)}_{0|t}=x_{t}\quad\fa i=0,...,N-1,\; k=1,...,K,\\
        \pst\quad & x^{(k)}_{i+1|t}\in\BX\;\;\fa i=1,...,N-1,\; k=1,...,K,\\
        \pst\quad & u_{i|t}\in\BU\;\;\fa i=0,...,N-1\ef
\end{align}\end{subequations}
The dynamics (\ref{Equ:FHSCP}b) provide $K$ different state trajectories over the prediction horizon, each corresponding to one sequence of affine transition maps defined by a particular scenario $\om^{(k)}_{t}$. Note that these $K$ state trajectories are not fixed, as they are still subject to the inputs $u_{0|t},...,u_{N-1|t}$. The cost function (\ref{Equ:FHSCP}a) approximates (\ref{Equ:FHOCP}a) as an average over all $K$ scenarios. The state constraints (\ref{Equ:FHSCP}c) are required to hold for $K$ sampled state trajectories over the prediction horizon.

Applying a receding horizon policy, the SCMPC feedback law is defined as follows (see also Figure \ref{Fig:Algorithm}, for $R=0$). At each time step $t\in\BN$ the current state measurement $x_{t}$ is substituted into (\ref{Equ:FHSCP}b), and the current input $u_{t}:=u'_{0|t}$ is set to the first of the optimal FHSCP solution $\{u'_{0|t},...,u'_{N-1|t}\}$, which is called the \emph{scenario solution}.

Unlike many MPC approaches, SCMPC does not have an inherent guarantee of \emph{recursive feasibility}, in the sense of \cite[Sec.\,4]{Mayne:2000}. Hence for a proper analysis of the closed-loop system, the following is assumed.

\begin{assumption}[Resolvability]\label{Ass:Resolvability}
    Under the SCMPC regime, each FHSCP admits a feasible solution at every time step $t$ almost
    surely.
\end{assumption}

While Assumption \ref{Ass:Resolvability} appears to be restrictive from a theoretical point of view, it is often reasonable from a practical point of view. For some applications, such as buildings \cite{OldeEtAl:2012}, recursive feasibility may hold by intuition, or it may be ensured by the use of \emph{soft constraints} \cite[Sec.\,2]{QinBadg:2003}. All in all, MPC remains a useful tool in practice, even for difficult stochastic systems \eqref{Equ:DynSystem} without the possibility of an explicit guarantee of recursive feasibility.

The following are possible alternatives and also convex formulations of \eqref{Equ:FHSCP}. The reasoning in each case is based on the theory in \cite{SchildEtAl:2014} and omitted for brevity.

\begin{remark}[Alternative Formulations]
    (a) Instead of the average cost in (\ref{Equ:FHSCP}a), the minimization may concern the cost of
    a nominal trajectory, as \eg in \cite{Schildi:2012,PrandEtAl:2012}; or the average may be taken
    over any sample size other than $K$.
    (b) The inclusion of additional chance constraints into \eqref{Equ:FHSCP}, as mentioned in
    Remark \ref{Rem:Formulation}(b), is straightforward. The number of scenarios $K_{j}$ may
    generally differ between multiple chance constraints.
    (c) In case of a value-at-risk formulation, as in Remark \ref{Rem:Formulation}(a), the average
    cost in (\ref{Equ:FHSCP}a) is replaced by the maximum:
    \vspace*{-0.3cm}
    \begin{equation*}
        \text{``}\sum_{k=1}^{K}\text{''}\qquad\longrightarrow\qquad
        \text{``}\max_{k=1,...,K}\text{''}\ec\vspace*{-0.3cm}
    \end{equation*}    
    where the sample size $K$ must be selected according to the desired risk level.
\end{remark}

\begin{remark}[Control Parameterization]\label{Rem:ContrParam}
    In the FHSCP, the predicted control inputs $u_{0|t},...,u_{N-1|t}$ may also be parameterized as
    a weighted sum of basis functions of the uncertainty, as proposed in \cite{SkafBoyd:2009,
    VayaEtAl:2012}.
    In particular, let $e_{1},...,e_{m}$ be the $J_{0}:=m$ unit vectors in $\BRm$, and for each time
    step $i=1,...,N$ let $q^{(j)}_{i|t}:\Delta^{i-1}\to\BRm$ be a finite set $j\in\{1,...,J_{i}\}$
    of pre-selected basis functions. Then
    \begin{align*}
    	&u_{0|t} := \sum_{j=1}^{J_{0}}\phi_{0}^{(j)}b_{j}\,,\\
    	&u_{i|t} := \sum_{j=1}^{J_{i}}\phi_{i}^{(j)}
	           q^{(j)}_{i|t}\bigl(\de^{(k)}_{0|t},...,\de^{(k)}_{i-1|t}\bigr)\quad\fa i=1,...,N-1\,,
    \end{align*}
    can be substituted into problem \eqref{Equ:FHSCP}, so that the weights $\phi_{i}^{(j)}\in\BR$
    for $i=0,...,N-1$ and $j=1,...,J_{i}$ become the new decision variables.
\end{remark}

A control parameterization with an increasing number of basis functions $J_{1},...,J_{N-1}$ generally improves the quality of the SCMPC feedback, while increasing the number of decision variables and hence the computational complexity; see \cite{SkafBoyd:2009,VayaEtAl:2012} for more details.

Given the sampled scenarios, \eqref{Equ:FHSCP} is a convex optimization program for which efficient solution algorithms exist, depending on its structure \cite{BoydVan:2004}. In particular, if $\BX$ and $\BU$ are polytopic (respectively ellipsoidal) sets, then the FHSCP has linear (second-order cone) constraints. If the stage cost is either (\ref{Equ:StageCost}a,b), then the FHSCP has a reformulation with a linear objective function, using auxiliary variables. If the stage cost is (\ref{Equ:StageCost}c), then the FHSCP can be expressed as a quadratic program. More details on these formulation procedures are found in \cite[pp.\,154\,f.]{Macie:2002}.

\subsection{A-Posteriori Scenario Removal}

A key merit of SCMPC is that it renders the uncertain control system (\ref{Equ:FHOCP}b) into multiple deterministic affine systems (\ref{Equ:FHSCP}b) by substituting particular scenarios. This significantly simplifies the solution to the FHSCP, as compared to the FHOCP. However, by introducing these random scenarios, a randomizing element is added to the SCMPC feedback law. In particular, the closed-loop system may occasionally show an erratic behavior due to highly unlikely outliers in the sampled scenarios. 

This effect can be mitigated by a-posteriori scenario removal, see \cite{CampGar:2011}. This allows for the \emph{state constraints} (\ref{Equ:FHSCP}c) corresponding to $R>0$ scenarios to be removed \emph{after} the outcomes of all samples have been observed. In exchange, the original sample size $K$ must be (appropriately) increased over its value for $R=0$. Any appropriate combination $(K,R)$ is called a \emph{sample-removal pair}. The choice of appropriate values for $K$ and $R$ is deferred to Section \ref{Sec:Samples}. The selection of removed scenarios is performed by a \emph{(scenario) removal algorithm} \cite[Def.\,2.1]{CampGar:2011}.

\begin{definition}[Removal Algorithm]\label{Def:RemAlg}
    (a) For each $\xi\in\BRn$, the \emph{(scenario) removal algorithm} $\CA_{\xi}:\Delta^{NK}\to
    \Delta^{N(K-R)}$ is a deterministic function selecting $(K-R)$ out of $K$ scenarios 
    $\{\om^{(1)}_{t},...,\om^{(K)}_{t}\}$. 
    (b) The selected scenarios at time step $t$ shall be denoted by
    \begin{equation*}
        \Om_{t}:=\CA_{x_{t}}\bigl(\om^{(1)}_{t},...,\om^{(K)}_{t}\bigr)\ef
    \end{equation*}
\end{definition}

Definition \ref{Def:RemAlg} is very general, in the sense that it covers a great variety of possible scenario removal algorithms. However, the most common and practical algorithms are described below:
\begin{description}
\item[\emph{\textmd{Optimal Removal:}}] The FHSCP is solved for all possible combinations of choosing $R$ out of $K$ scenarios. Then the combination that yields the lowest cost function value of all the solutions is selected. This requires the solution to $K$ choose $R$ instances of the FHSCP, a complexity that is usually prohibitive for larger values of $R$.\vspace*{0.2cm}
\item[\emph{\textmd{Greedy Removal:}}] The FHSCP is first solved with all $K$ scenarios. Then, in each of $R$ consecutive steps, the state constraints of a single scenario are removed that yields the biggest improvement, either in the total cost or in the first stage cost. Thus the procedure terminates after solving $KR-R(R-1)/2$ instances of FHSCP.\vspace*{0.2cm}
\item[\emph{\textmd{Marginal Removal:}}] The FHSCP is first solved with the state constraints of all $K$ scenarios. Then, in each of $R$ consecutive steps, the state constraints of a single scenario are removed based on the highest Lagrange multiplier. Hence the procedure requires the solution to $K$ instances of FHSCP.
\end{description}

Figure \ref{Fig:Algorithm} depicts an algorithmic overview of SCMPC, for the general case with scenario removal $R>0$. For the case without scenario removal, consider $R=0$ and the selected scenarios $\Om_{t}:=\{\om^{(1)}_{t},...,\om^{(K)}_{t}\}$. \vspace*{0.3cm}

\begin{figure}[h]
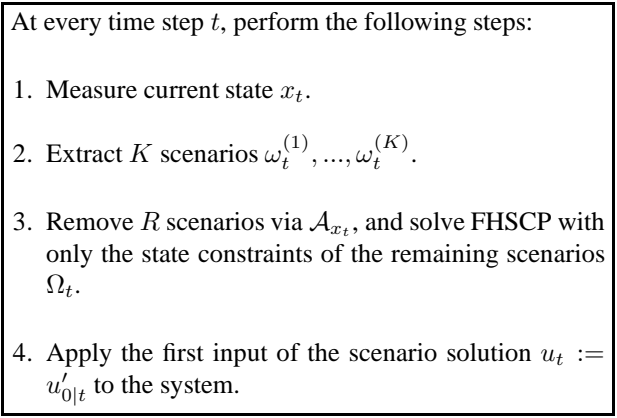

	\centering
	\setlength{\fboxrule}{0.5pt}
	\fbox{\begin{minipage}[h]{79mm}
	At every time step $t$, perform the following steps:\vspace*{0.2cm}
	\begin{enumerate}[leftmargin=0.47cm]
	\item Measure current state $x_{t}$.\vspace*{0.15cm}
	\item Extract $K$ scenarios $\om^{(1)}_{t},...,\om^{(K)}_{t}$.\vspace*{0.2cm}
	\item Remove $R$ scenarios via $\CA_{x_{t}}$, and solve FHSCP with only the state constraints of
	the remaining scenarios $\Om_{t}$.\vspace*{0.2cm}
	\item Apply the first input of the scenario solution $u_{t}:=u'_{0|t}$ to the system.
	\end{enumerate}
	\end{minipage}}
    \caption{Schematic overview of the SCMPC algorithm, for the case with scenario removal ($R>0$)
    and without scenario removal ($R=0$).\label{Fig:Algorithm}}
\end{figure}

\section{Problem Structure and Sample Complexity}\label{Sec:Samples}

For the SCMPC algorithm described in Section \ref{Sec:SCMPC}, the sample-removal pair $(K,R)$ remains to be specified. Appropriate values for $K$ and $R$ are theoretically derived in this section. Their values generally depend on the control system and the constraints, and $K$ is referred to as the \emph{sample complexity} of the SCMPC problem.

For some intuition about this problem, suppose that $R\geq 0$ is fixed and the sample size $K$ is increased. This means that the solution to the FHSCP becomes robust to more scenarios, with the following consequences. First, the average-in-time state constraint violations \eqref{Equ:AvgViol} decrease, in general. Therefore the state constraint will translate into a lower bound on $K$. Second, the computational complexity increases as well as the average-in-time closed-loop cost \eqref{Equ:AvgCost}, in general. Therefore the objective is to choose $K$ as small as possible, and ideally equal to its lower bound.

The higher the number of removed constraints $R\geq 0$, the higher will be the lower bound on $K$, in order for the state constraints \eqref{Equ:AvgViol} to be satisfied. Now consider pairs $(R,K)$ of removed constraints $R$ together with their corresponding lower bounds $K$, which equally satisfy the state constraints \eqref{Equ:AvgViol}. For the intuition, suppose $R$ is increased, so $K$ increases as well. Then the computational complexity grows, due to more constraints in the FHSCP and the removal algorithm. At the same time, the solution quality of the FHSCP improves, in general, and hence the average-in-time closed-loop cost \eqref{Equ:AvgCost} decreases. Therefore $R$ is usually fixed to a value that is as high as admitted by the available computational resources.

\subsection{Support Rank}\label{Sec:SRank}

According to the classic scenario approach \cite{CampGar:2008,CampGar:2011}, the relevant quantity for determining the sample size $K$ for a single chance constraint (with a fixed $R$) is the number of \emph{support constraints} \cite[Def.\,2.1]{CampGar:2008}. In fact, $K$ grows with the (unknown) number of support constraints, so the goal is to obtain a tight upper bound. For the classic scenario approach, this upper bound is given by the dimension of the decision space \cite[Prop.\,2.2]{CampGar:2008}, \ie $Nm$ in the case of the FHSCP. 

The FHSCP is a multi-stage stochastic program, with multiple chance constraints (namely $N$, one per stage). This requires an extension to the classic scenario approach; the reader is referred to \cite{SchildEtAl:2014} for more details. Now each chance constraint contributes an individual number of support constraints, to which an upper bound must be obtained. These individual upper bounds are provided by the \emph{support rank} of each chance constraint \cite[Def.\,3.6]{SchildEtAl:2014}.

\begin{definition}[Support Rank]\label{Def:SuppRank}
    (a) The \emph{unconstrained subspace} $\CL_{i}$ of a constraint $i\in\{0,...,N-1\}$ in 
    (\ref{Equ:FHSCP}c) is the largest (in the set inclusion sense) linear subspace of the search
    space $\BR^{Nm}$ that remains unconstrained by all sampled instances of $i$, almost surely.
    (b) The \emph{support rank} of a constraint $i\in\{0,...,N-1\}$ in (\ref{Equ:FHSCP}c) is
    \begin{equation*} 
        \rho_{i}:=Nm-\dim\CL_{i}\ec
    \end{equation*}
    where $\dim\CL_{i}$ represents the dimension of the unconstrained subspace $\CL_{i}$. 
\end{definition}

Note that the support rank is an inherent property of a particular chance constraint and it is not affected by the simultaneous presence of other constraints. Hence the set of constraints of the FHSCP may change, for instance, due to the reformulations of Remark \ref{Rem:Formulation}. 

Besides the extension to multiple chance constraints, the support rank has the merit of a significant reduction of the upper bound on the number of support constraints. Indeed, the following two lemmas replace the classic upper bound $Nm$ with much lower values, such as $l\leq n$ or $m$, depending on the problem structure. 

For systems affected by \emph{additive} disturbances only, the support rank of any state constraint in the FHSCP is given by the support rank $l\leq n$ of $\BX$ in $\BRn$ (\ie the co-dimension of the largest linear subspace that is unconstrained by $\BX$).

\begin{lemma}[Pure Additive Disturbances]\label{The:AddDist}
    Let $l\leq n$ be the support rank of $\BX$ and suppose that $A\bigl(\de^{(k)}_{i|t}\bigr)\equiv
    A$ and $B\bigl(\de^{(k)}_{i|t}\bigr)\equiv B$ are constant and the control is not parameterized
    (as in Remark \ref{Rem:ContrParam}).
    Then the support rank of any state constraint $i\in\{0,...,N-1\}$ in (\ref{Equ:FHSCP}c) is at
    most $l$.
\end{lemma}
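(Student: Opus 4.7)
The plan is to identify a linear subspace of $\BR^{Nm}$ of dimension at least $Nm-l$ along which the $i$-th state constraint in (\ref{Equ:FHSCP}c) is completely insensitive, and then invoke Definition~\ref{Def:SuppRank}(b).

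First, I would unroll the dynamics (\ref{Equ:FHSCP}b) under the hypothesis $A(\de)\equiv A$, $B(\de)\equiv B$ to obtain the explicit affine expression
\begin{equation*}
x^{(k)}_{i+1|t} \;=\; A^{i+1}x_{t} \;+\; \sum_{j=0}^{i} A^{i-j}B\,u_{j|t} \;+\; \sum_{j=0}^{i} A^{i-j}w\bigl(\de^{(k)}_{j|t}\bigr)\ef
\end{equation*}
Writing $u:=(u_{0|t},\dots,u_{N-1|t})\in\BR^{Nm}$, only the middle sum depends on $u$, and this dependence is encoded by a single linear map $L_{i}:\BR^{Nm}\to\BRn$, $L_{i}u:=\sum_{j=0}^{i}A^{i-j}B\,u_{j|t}$, which, crucially, is the same across all $K$ scenarios. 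This is the one and only place where the hypothesis of constant $A$, $B$ (and of no control parameterization in the sense of Remark~\ref{Rem:ContrParam}) enters.

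Second, I would let $\CL_{\BX}\subset\BRn$ denote the largest linear subspace with $\BX+\CL_{\BX}=\BX$, so that $\dim\CL_{\BX}=n-l$ by the definition of the support rank of $\BX$. For any $v\in L_{i}^{-1}(\CL_{\BX})$ and any $u$, one has $L_{i}(u+v)+c^{(k)}_{i}=L_{i}u+c^{(k)}_{i}+L_{i}v$, where $c^{(k)}_{i}$ collects the $u$-independent terms of $x^{(k)}_{i+1|t}$. Since $L_{i}v\in\CL_{\BX}$, membership in $\BX$ is unchanged by the shift $v$, simultaneously for every realization of the scenarios, and hence $L_{i}^{-1}(\CL_{\BX})\subseteq\CL_{i}$. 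A dimension count then closes the argument: setting $r:=\dim\mathrm{Im}(L_{i})$, the standard identity $\dim L_{i}^{-1}(S)=\dim\ker L_{i}+\dim(\mathrm{Im}(L_{i})\cap S)$ applied to $S=\CL_{\BX}$, together with the subspace intersection bound $\dim(\mathrm{Im}(L_{i})\cap\CL_{\BX})\geq\max(0,r-l)$ in $\BRn$, yields
\begin{equation*}
\dim\CL_{i}\;\geq\;(Nm-r)+\max(0,r-l)\;\geq\;Nm-l\ec
\end{equation*}
in both regimes $r\leq l$ and $r>l$. Definition~\ref{Def:SuppRank}(b) then gives $\rho_{i}=Nm-\dim\CL_{i}\leq l$.

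The main obstacle is conceptual rather than computational. The care lies in correctly parsing the notion of ``unconstrained by all sampled instances'': because $L_{i}$ is scenario-independent under pure additive disturbances, a single subspace $L_{i}^{-1}(\CL_{\BX})$ works simultaneously for all $k$. If either $A$ or $B$ were random, the input-to-state map would itself vary with $k$, and no fixed linear subspace of $\BR^{Nm}$ could be expected to leave all $K$ sampled constraints invariant, so the present argument would break down; this is presumably why a separate lemma is needed to treat random $A$, $B$.
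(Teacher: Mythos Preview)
Your proof is correct and follows essentially the same route as the paper: unroll the dynamics to obtain a scenario-independent linear map $L_{i}:\BR^{Nm}\to\BRn$, then exhibit an unconstrained subspace of dimension at least $Nm-l$. The paper phrases the last step via a projection matrix $P\in\BR^{l\times n}$ with $x\in\BX\Leftrightarrow Px\in P\BX$, so that the relevant subspace is $\ker(PL_{i})$ and the dimension bound is immediate from the row count of $PL_{i}$; your preimage $L_{i}^{-1}(\CL_{\BX})$ is the same subspace (since $\ker P=\CL_{\BX}$), but you recover the bound through a slightly longer intersection-dimension argument rather than the one-line rank observation.
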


For systems affected by \emph{additive and multiplicative} disturbances, Lemma \ref{The:AddDist} no longer holds. However, it will be seen that for the desired closed-loop properties, the relevant quantity for selecting the sample size $K$ is the support rank $\rho_{1}$ of the state constraint on $x_{1|t}$ only. For this first predicted step, the support rank is restricted to at most $m$, under both additive and multiplicative disturbances.

\begin{lemma}[Additive and Multiplicative Disturbances]\label{The:FirstStep}
    The support rank $\rho_{1}$ of constraint $i=1$ in (\ref{Equ:FHSCP}c) is at most $m$.
\end{lemma}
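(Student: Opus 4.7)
The plan is to exhibit a large linear subspace of the search space $\BR^{Nm}$ that is left untouched by every sampled instance of the first-step state constraint, and then to invoke Definition \ref{Def:SuppRank} directly to turn its dimension into an upper bound on $\rho_{1}$.

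First, I would substitute $x^{(k)}_{0|t}=x_{t}$ into (\ref{Equ:FHSCP}b) at $i=0$ to obtain
\[
    x^{(k)}_{1|t} = A\bigl(\de^{(k)}_{0|t}\bigr)x_{t} + B\bigl(\de^{(k)}_{0|t}\bigr)u_{0|t} + w\bigl(\de^{(k)}_{0|t}\bigr).
\]
The key observation driving the argument is that, because $x_{t}$ is fixed at the moment the FHSCP is solved, $x^{(k)}_{1|t}$ is affine in the single block of decision variables $u_{0|t}$, whereas the remaining blocks $u_{1|t},\ldots,u_{N-1|t}$ never appear, for any $k$ and any realization of $\de^{(k)}_{0|t}$. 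Consequently, the $(N-1)m$-dimensional subspace
\[
    S := \bigl\{(0,v_{1},\ldots,v_{N-1})\in\BR^{Nm} : v_{1},\ldots,v_{N-1}\in\BRm\bigr\}
\]
is a legitimate candidate for the unconstrained subspace of constraint $i=1$: perturbing any admissible decision in a direction from $S$ leaves $u_{0|t}$, and hence $x^{(k)}_{1|t}$, entirely unchanged for every sampled scenario, so every sampled instance of the constraint is indifferent to the perturbation almost surely. This yields $S\subseteq\CL_{1}$, hence $\dim\CL_{1}\geq(N-1)m$, and the claim $\rho_{1}=Nm-\dim\CL_{1}\leq m$ follows at once.

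I do not anticipate any genuine obstacle: the argument is essentially a dimensional count that exploits the fact that, at the very first prediction step, the multiplicative uncertainty has not yet had the chance to couple in any further input blocks. The only bookkeeping point to watch is that the bound must be independent of the particular sample realizations, which it is, because the subspace $S$ is determined solely by the sparsity pattern of the one-step-ahead dynamics in the decision vector, not by the values of $\de^{(k)}_{0|t}$. If control parameterization as in Remark \ref{Rem:ContrParam} is adopted, the same count applies with $u_{0|t}$ replaced by its $m$ parameterizing weights $\phi_{0}^{(1)},\ldots,\phi_{0}^{(m)}$, and the rest of the argument is unchanged.
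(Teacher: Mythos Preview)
Your proposal is correct and follows essentially the same approach as the paper: both arguments rest on the observation that $x^{(k)}_{1|t}$ depends on the decision vector only through $u_{0|t}$, so the last $(N-1)m$ coordinates form an unconstrained subspace regardless of the sampled uncertainty. The paper phrases this as a rank bound on the coefficient matrix $\bigl[\,P B^{(k)}_{0|t}\;\;0\;\cdots\;0\,\bigr]$, while you exhibit the subspace $S$ explicitly and invoke Definition~\ref{Def:SuppRank}, but the content is identical.
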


For the sake of readability, the proofs of Lemmas \ref{The:AddDist} and \ref{The:FirstStep} are deferred to Appendix \ref{Sec:LemProof}. They effectively decouple the support rank, and hence the sample size $K$, from the horizon length $N$. 

Note that the result of Lemma \ref{The:FirstStep} holds also for the parameterized control laws of Remark \ref{Rem:ContrParam}. In this case, it decouples the sample size $K$ from the number of basis functions $J_{i}$ for all stages $i=1,...,N-1$.

Tighter bounds of $\rho_{1}$ than those in Lemmas \ref{The:AddDist} and \ref{The:FirstStep} may exist, resulting from a special structure of the system \eqref{Equ:DynSystem} and/or the state constraint set $\BX$. The basic insights to exploit this can be found in the Appendix \ref{Sec:LemProof} and \cite{SchildEtAl:2014}.

\subsection{Sample Complexity}\label{Sec:SampComp}

This section describes the selection of the sample-removal pair $(K,R)$, based on a bound of the support rank $\rho_{1}$. Throughout this subsection, the initial state $x_{t}$ is considered to be fixed to an arbitrary value. 

Let $\Vm_{t}|x_{t}$ denote the \emph{(first step) violation probability}, \ie the probability with which the first predicted state falls outside of $\BX$: 
\begin{equation}\label{Equ:DefViol}
    \Vm_{t}|x_{t}:=
	\Pb\bigl[A(\de_{t})x_{t}+B(\de_{t})u'_{0|t}+w(\de_{t})\notin\BX\,\big|\,x_{t}\bigr]\ef
\end{equation}
Recall that $u'_{0|t}$ denotes the first input of the scenario solution $\{u'_{0|t},...,u'_{N-1|t}\}$. Clearly, $u'_{0|t}$ and $\Vm_{t}|x_{t}$ depend on the scenarios $\Om_{t}$ that are substituted into the FHSCP at time $t$. The notation $u'_{0|t}(\Om_{t})$ and $\Vm_{t}|x_{t}(\Om_{t})$ shall be used occasionally to emphasize this fact. 

The violation probability $\Vm_{t}|x_{t}(\Om_{t})$ can be considered as a random variable on the probability space $(\Delta^{KN},\Pb^{KN})$, with support in $[0,1]$. Here $\Delta^{KN}$ and $\Pb^{KN}$ denote the $KN$-th product of the set $\Delta$ and the measure $\Pb$, respectively. For distinction, the expectation operator on $(\Delta,\Pb)$ is denoted $\E$, and that on $(\Delta^{KN},\Pb^{KN})$ is denoted $\E^{KN}$.

The distribution of $\Vm_{t}|x_{t}(\Om_{t})$ is unknown, being a complicated function of the entire control problem \eqref{Equ:FHOCP} and the removal algorithm $\CA_{x_{t}}$. However, it is possible to derive the following upper bound on this distribution.

\begin{lemma}[Upper Bound on Distribution]\label{The:DistrBound}
    Let Assumptions \ref{Ass:Uncertainty}, \ref{Ass:Control}, \ref{Ass:Resolvability} hold and 
    $x_{t}\in\BRn$ be an arbitrary initial state. For any violation level $\nu\in[0,1]$,
    \begin{subequations}
    \begin{equation}\label{Equ:DistrBound}
        \Pb^{KN}\bigl[\Vm_{t}|x_{t}(\Om_{t})>\nu\bigr]\leq U_{K,R,\rho_{1}}(\nu)\:,
        \hspace*{2.03cm}
    \end{equation}
    \begin{equation}
         U_{K,R,\rho_{1}}(\nu):=
        \min\Bigl\{1,{R+\rho_{1}-1\choose R}\B\bigl(\nu;K,R+\rho_{1}-1\bigl)\Bigr\}\:,
    \end{equation}
    \end{subequations}
    where $\B(\,\cdot\,;\,\cdot\,,\,\cdot\,)$ represents the beta distribution function 
    \cite[frm.\,26.5.3,\,26.5.7]{Abramowitz:1970},
    \begin{equation*}
        \B\bigl(\nu;K,R+\rho_{1}-1\bigl):=
        \sum_{j=0}^{R+\rho_{1}-1}{K\choose j}\nu^{j}(1-\nu)^{K-j}\:.
    \end{equation*}
\end{lemma}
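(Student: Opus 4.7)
The plan is to derive the bound from the classical sampling-and-discarding theorem of the scenario approach \cite{CampGar:2011}, combined with the multi-constraint support-rank refinement of \cite{SchildEtAl:2014}. The first observation is that $\Vm_{t}|x_{t}(\Om_{t})$ in \eqref{Equ:DefViol} depends on $\Om_{t}$ only through the first predicted input $u'_{0|t}$, and it measures violation only of the first-step state inclusion $x_{1|t}\in\BX$. I would therefore recast the FHSCP as a convex scenario program with $K$ \iid full-horizon scenarios $\om^{(k)}_{t}\in\Delta^{N}$ drawn from the product measure $\Pb^{N}$, whose ``tested'' chance constraint is precisely the first-stage inclusion, while the remaining state inclusions for $i\geq 2$, the input constraints, and the cost function are auxiliary and do not appear in the measured violation.

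Next, I would verify the standing hypotheses of the sampling-and-discarding framework: Assumption \ref{Ass:Control} supplies convexity of $\BU$, $\BX$, and $\ell$; Assumption \ref{Ass:Resolvability} provides almost-sure feasibility of the scenario program; and Definition \ref{Def:RemAlg} makes $\CA_{x_{t}}$ a deterministic selection of $K-R$ out of $K$ scenarios that is permissible in the sense of \cite[Def.\,2.1]{CampGar:2011}. With these in place, I would apply the multi-chance-constraint scenario result of \cite{SchildEtAl:2014} to the first-stage constraint: its violation probability satisfies
\begin{equation*}
    \Pb^{KN}\bigl[\Vm_{t}|x_{t}(\Om_{t})>\nu\bigr]\leq{R+d-1\choose R}\B\bigl(\nu;K,R+d-1\bigr)
\end{equation*}
for any upper bound $d$ on the number of support constraints attributable to that chance constraint. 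By Definition \ref{Def:SuppRank}, the support rank $\rho_{1}$ is such an upper bound; setting $d=\rho_{1}$, truncating at $1$ because the left-hand side is a probability, and recognising the right-hand side as $U_{K,R,\rho_{1}}(\nu)$ yields \eqref{Equ:DistrBound}.

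The main obstacle is to legitimise the decoupling: the other constraints in the FHSCP (stages $i=2,\dots,N-1$, the input constraints, and the common cost coupling all $K$ sampled trajectories) must not inflate the support-constraint count charged to the first stage. This is precisely what the multi-constraint support-rank machinery of \cite{SchildEtAl:2014} delivers, so I would invoke it as a black box rather than re-derive it. A secondary subtlety is that $\Vm_{t}|x_{t}$ is evaluated on a fresh uncertainty $\de_{t}$ that is independent of $\Om_{t}$ and distinct from every sampled $\de^{(k)}_{0|t}$; this is handled by Assumption \ref{Ass:Uncertainty}(a) together with the conditional form of \eqref{Equ:DefViol}, which allow the product-space bound on $(\Delta^{KN},\Pb^{KN})$ to control the conditional violation probability.
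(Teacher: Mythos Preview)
Your proposal is correct and follows essentially the same approach as the paper: the paper's own proof is a one-line citation of \cite[Thm.\,6.7]{SchildEtAl:2014} together with the trivial truncation of the probability bound at $1$. Your write-up simply unpacks the hypotheses that make that citation legitimate (convexity from Assumption~\ref{Ass:Control}, feasibility from Assumption~\ref{Ass:Resolvability}, the removal rule from Definition~\ref{Def:RemAlg}, and the support-rank bound $\rho_{1}$ for the first-stage constraint), which is exactly what the paper leaves implicit.
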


\begin{proof}
    The proof is a straightforward extension of \cite[Thm.\,6.7]{SchildEtAl:2014}, where the bound
    on $\Vm_{t}|x_{t}(\Om_{t})$ is saturated at $1$.
\end{proof}

This paper exploits the result of Lemma \ref{The:DistrBound} to obtain an upper bound on the expectation
\begin{equation}
    \E^{KN}\bigl[\Vm_{t}\,\big|\,x_{t}\bigr]:=\int_{\Delta^{KN}}\Vm_{t}|x_{t}(\Om_{t})\dm\Pb^{KN}\ef
\end{equation}
A reformulation via the indicator function $\If:\Delta^{KN}\to\{0,1\}$ yields that
\begin{align}\label{Equ:NumInt}
    \E^{KN}\bigl[\Vm_{t}\,\big|\,x_{t}\bigr]
    &=\int_{[0,1]}\int_{\Delta^{KN}}\If\bigl(\Vm_{t}|x_{t}(\Om_{t})>\nu\bigr)\dm\Pb^{KN}d\nu
                                                                                         \nonumber\\
    &=\int_{[0,1]}\Pb^{KN}\bigl[\Vm_{t}|x_{t}(\Om_{t})>\nu\bigr]\dm\nu\nonumber\\
    &\leq\int_{[0,1]}U_{K,R,\rho_{1}}(\nu)\dm\nu\ef
\end{align}

\begin{definition}[Admissible Sample-Removal Pair]\label{Def:AdmPair}
    A sample-removal pair $(K,R)$ is \emph{admissible} if its substitution into \eqref{Equ:NumInt}
    yields $\E^{KN}\bigl[\Vm_{t}\,\big|\,x_{t}\bigr]\leq\ep$.
\end{definition}

Whether a given sample-removal pair $(K,R)$ is admissible can be tested by performing the one-dimensional numerical integration \eqref{Equ:NumInt}. It can easily be seen that the integral value \eqref{Equ:NumInt} monotonically decreases with $K$ and monotonically increases with $R$. Hence, if either $K$ or $R$ is fixed, an admissible sample-removal pair $(K,R)$ can be determined \eg by a bisection method. Moreover, if $R$ is fixed, there always exist $K$ large enough to generate an admissible pair $(K,R)$.

\begin{remark}[No Scenario Removal]\label{Rem:NoRemoval}
    If $R=0$, the integration \eqref{Equ:NumInt} can be replaced by the exact analytic formula
    \begin{equation}\label{Equ:SimpNumInt}
        \E^{KN}\bigl[\Vm_{t}\,\big|\,x_{t}\bigr]\leq\frac{\rho_{1}}{K+1}\ef
    \end{equation}
\end{remark}

Figure \ref{Fig:ViolProb} illustrates the monotonic relationship of the upper bound \eqref{Equ:NumInt} in $K$ and $R$. Supposing that $R=0,30,100$ is fixed, the corresponding admissible pair $(K,R)$ can be found by moving along the graphs until the desired violation level $\ep$ is reached. The solid and the dashed line correspond to different support dimensions $\rho_{1}=2$ and $\rho_{1}=5$.

\begin{figure}[h]
	\centering
    \parbox[c][74mm]{83mm}{\centering
    \begin{pspicture}(2,0)(85,74)
    \input{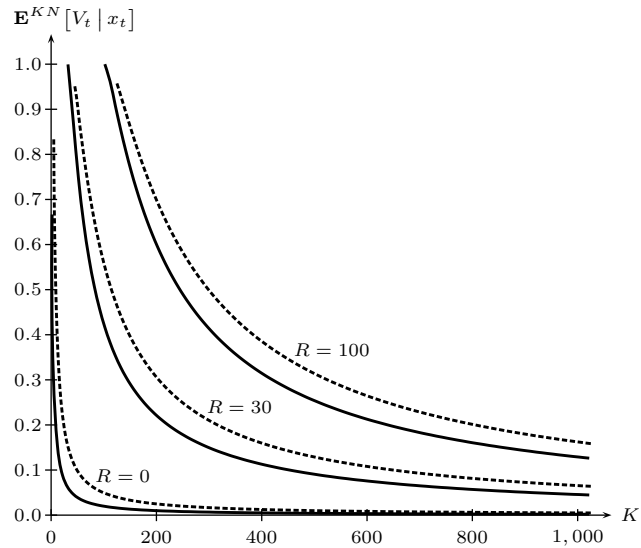}
    \end{pspicture}}
    \caption{Upper bound on the expected violation probability $\E^{KN}\bigl[\Vm_{t}\,\big|\,x_{t}
    \bigr]$, as a function of the sample size $K$, for different scenario removals $R$ and support
    dimensions $\rho_{1}=2$ (solid lines) and $\rho_{1}=5$ (dashed lines).
    \label{Fig:ViolProb}}
\end{figure}

\subsection{Closed-Loop Properties}

This section analyzes the closed-loop properties of the control system under the SCMPC law for an admissible sample-removal pair $(K,R)$. To this end, the underlying stochastic process is first described. Recall that
\begin{itemize}
\item $x_{0},...,x_{T-1}$ is the closed-loop trajectory, where $x_{t}$ depends on all past uncertainties $\de_{0},...,\de_{t-1}$ as well as all past scenarios $\Om_{0},...,\Om_{t-1}$;
\item $\Vm_{0},...,\Vm_{T-1}$ are the violation probabilities, where $\Vm_{t}$ depends on $x_{t}$ and $\Om_{t}$, and hence on $\Om_{0},...,\Om_{t}$ and $\de_{0},...,\de_{t-1}$;
\item $M_{0},...,M_{T-1}$ indicate the actual violation of the constraints, where $M_{t}$ depends on $x_{t+1}$, and hence on $\Om_{0},...,\Om_{t}$ and $\de_{0},...,\de_{t}$.
\end{itemize}

At each time step $t$, there are a total of $D:=(KN+1)$ random variables, namely the scenarios together with the disturbance $\{\de_{t},\Om_{t}\}\in\Delta^{(KN+1)}=\Delta^{D}$. In order to simplify notations, define
\begin{equation*}
   \CF_{t}:=\{\de_{0},\Om_{0},...,\de_{t},\Om_{t}\}\in\Delta^{(t+1)D}\ec
\end{equation*}
for any $t\in\{0,...,T-1\}$. These auxiliary variables allow for the random variables $x_{t}(\CF_{t-1})$, $\Vm_{t}(\CF_{t-1},\Om_{t})$, $M_{t}(\CF_{t})$ to be expressed in terms of their elementary uncertainties. Moreover, let $\Pb^{(t+1)D}$ denote the probability measure and $\E^{(t+1)D}$ the expectation operator on $\Delta^{(t+1)D}$, for any $t\in\{0,...,T-1\}$.

Observe that $M_{t}\in\{0,1\}$ is a Bernoulli random variable with (random) parameter $\Vm_{t}$, because
\begin{align}\label{Equ:BinVar}
    \E\bigl[M_{t}\,\big|\,\CF_{t-1},\Om_{t}\bigr]
                                             &=\int_{\Delta}M_{t}(\CF_{t})\dm\Pb(\de_{t})\nonumber\\
                                             &=\Vm_{t}(\CF_{t-1},\Om_{t})
\end{align}
for any values of $\CF_{t-1},\Om_{t}$.

\begin{theorem}\label{The:ConvExp}
    Let Assumptions \ref{Ass:Uncertainty}, \ref{Ass:Control}, \ref{Ass:Resolvability} hold and
    $(K,R)$ be an admissible sample-removal pair.
    Then the expected time-average of closed-loop constraint violations \eqref{Equ:AvgViol} remains
    below the specified level $\ep$,
    \vspace*{-0.3cm}
    \begin{equation}\label{Equ:AsymExp}
    	\E^{TD}\bigl[\frac{1}{T}\sum_{t=0}^{T-1}M_{t}\bigr]\leq\ep\ef\vspace*{-0.3cm}
	\end{equation}
    for any $T\in\BN$.
\end{theorem}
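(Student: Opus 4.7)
The plan is to prove the time-average bound by establishing the stronger pointwise-in-time bound $\E^{(t+1)D}[M_t]\leq\ep$ for each individual $t\in\{0,\ldots,T-1\}$, and then appeal to linearity of expectation. Since $M_t$ is a function only of $\CF_t$, its expectation under $\Pb^{TD}$ equals its expectation under $\Pb^{(t+1)D}$, so it suffices to work with finitely many random variables at each step.

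The key step is a tower-property argument, conditioning in the natural order of the stochastic process. First, by \eqref{Equ:BinVar}, $\E[M_t\mid\CF_{t-1},\Om_t]=\Vm_t(\CF_{t-1},\Om_t)$ almost surely, simply because $M_t$ is Bernoulli with parameter $\Vm_t$ once $\CF_{t-1}$ and $\Om_t$ are fixed. Taking the conditional expectation with respect to $\CF_{t-1}$ on both sides, one obtains
\begin{equation*}
    \E^{(t+1)D}\bigl[M_t\,\big|\,\CF_{t-1}\bigr]
    =\E^{KN}\bigl[\Vm_t(\CF_{t-1},\Om_t)\,\big|\,\CF_{t-1}\bigr]\ef
\end{equation*}
Here I would emphasize that by Assumption~\ref{Ass:Uncertainty}(a), the fresh batch of scenarios $\Om_t$ is independent of $\CF_{t-1}$, and that $\Vm_t$ depends on $\CF_{t-1}$ only through the current state $x_t$ (since the removal algorithm $\CA_{x_t}$ and the FHSCP both take $x_t$ as input). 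Hence the inner conditional expectation coincides with $\E^{KN}[\Vm_t\mid x_t]$ evaluated at the random $x_t$.

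Now I would invoke Definition~\ref{Def:AdmPair}: admissibility of $(K,R)$ means that $\E^{KN}[\Vm_t\mid x_t]\leq\ep$ holds for \emph{every} fixed initial state $x_t\in\BRn$, by virtue of Lemma~\ref{The:DistrBound} and the integral bound \eqref{Equ:NumInt}. Consequently the bound transfers deterministically through the random $x_t$, giving $\E^{(t+1)D}[M_t\mid\CF_{t-1}]\leq\ep$ almost surely, and then $\E^{(t+1)D}[M_t]\leq\ep$ by the tower property. Summing over $t=0,\ldots,T-1$ and dividing by $T$ yields \eqref{Equ:AsymExp}.

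The main subtlety I would watch for is the interaction between the time-recursive dependence of $x_t$ on the entire past $(\de_0,\Om_0,\ldots,\de_{t-1},\Om_{t-1})$ and the fresh sampling of $\Om_t$. The bound from Lemma~\ref{The:DistrBound} is only stated for a fixed $x_t$, so it is essential that $\Om_t$ be independent of $\CF_{t-1}$ in order to ``freeze'' $x_t$ and apply admissibility; this is exactly what Assumption~\ref{Ass:Uncertainty}(a) provides. Once this independence is made explicit, the remainder is routine manipulation of conditional expectations, and no uniformity in $t$ beyond what is already guaranteed by the stationarity of $\de_t$ is needed.
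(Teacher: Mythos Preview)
Your proposal is correct and follows essentially the same approach as the paper: reduce to the per-step bound by linearity, then use \eqref{Equ:BinVar} together with the tower property (conditioning on $\CF_{t-1}$) so that admissibility of $(K,R)$ bounds the inner expectation by $\ep$. Your explicit discussion of why independence of $\Om_t$ from $\CF_{t-1}$ lets one ``freeze'' $x_t$ is a helpful elaboration, but the logical skeleton matches the paper's proof exactly.
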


\begin{proof}
By linearity of the expectation operator,
\begin{align*}
	&\E^{TD}\bigl[\frac{1}{T}\bigl(M_{0}+M_{1}+...+M_{T-1}\bigr)\bigr]\\
	&\,\,=\frac{1}{T}\bigl(\E^{D}\bigl[M_{0}\bigr]+\E^{2D}\bigl[M_{1}\bigr]+...+\E^{TD}\bigl[M_{T-1}\bigr]\bigr)\nonumber\\
	&\,\,=\frac{1}{T}\bigl(\E^{D-1}\bigl[\Vm_{0}\bigr]+\E^{2D-1}\bigl[\Vm_{1}\bigr]+...+\E^{TD-1}\bigl[\Vm_{T-1}\bigr]\bigr)\,,
\end{align*}
by virtue of \eqref{Equ:BinVar}. Moreover, for any $t\in\{0,...,T-1\}$,
\begin{equation*}
	\E^{(t+1)D-1}\bigl[\Vm_{t}\bigr]
	=\int_{\Delta^{tD}}\underbrace{\E^{D-1}\bigl[\Vm_{t}\,\big|\,\CF_{t-1}\bigr]}_{\leq\ep}
	\dm\Pb^{tD}\leq\ep\,,
\end{equation*}
where the integrand is pointwise upper bounded by $\ep$ because $(K,R)$ is an admissible sample-removal pair.
\end{proof}

Theorem \ref{The:ConvExp} shows that the chance constraints of the OCP can be expected to be satisfied over any finite time horizon $T$. The next Lemma \ref{The:AsymProb} sets the stage for an  even stronger result, Theorem \ref{The:ConvSure}, showing that the chance constraint are satisfied almost surely as $T\to\infty$.

\begin{lemma}\label{The:AsymProb}
    If Assumptions \ref{Ass:Uncertainty}, \ref{Ass:Control}, \ref{Ass:Resolvability} hold, then
    \vspace*{-0.3cm}
	\begin{equation}\label{Equ:AsymProb}
		\lim_{T\to\infty}\frac{1}{T}\sum_{t=0}^{T-1}
		\Bigl(M_{t}-\E^{D-1}\bigl[\Vm_{t}\big|\CF_{t-1}\bigr]\Bigr)=0\vspace*{-0.3cm}
	\end{equation}
	almost surely.
\end{lemma}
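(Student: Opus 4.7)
The plan is to recognize the summands as a bounded martingale difference sequence and then invoke a strong law of large numbers for martingales. Set $Z_t := M_t - \E^{D-1}[\Vm_t \,|\, \CF_{t-1}]$ and work with the natural filtration $(\mathcal{G}_t)_{t\geq 0}$ with $\mathcal{G}_t := \sigma(\CF_t)$, generated by all uncertainties and scenarios observed up to and including time $t$. The preparatory observations are that $Z_t$ is $\mathcal{G}_t$-measurable (since $M_t$ depends on $\CF_t$ while $\E^{D-1}[\Vm_t \,|\, \CF_{t-1}]$ is a function of $\CF_{t-1}$ alone, after integrating out $\Om_t$), and that $|Z_t| \leq 1$ almost surely because $M_t \in \{0,1\}$ and $\Vm_t \in [0,1]$.

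The first real step is to verify $\E[Z_t \,|\, \mathcal{G}_{t-1}] = 0$. I would invoke the tower property together with \eqref{Equ:BinVar}:
\begin{align*}
    \E\bigl[M_t \,\big|\, \CF_{t-1}\bigr]
    &= \E\Bigl[\E\bigl[M_t \,\big|\, \CF_{t-1},\Om_t\bigr] \,\Big|\, \CF_{t-1}\Bigr]\\
    &= \E\bigl[\Vm_t(\CF_{t-1},\Om_t) \,\big|\, \CF_{t-1}\bigr]
     = \E^{D-1}\bigl[\Vm_t \,\big|\, \CF_{t-1}\bigr],
\end{align*}
the last identity because the remaining expectation integrates precisely over the $KN = D-1$ scenario components constituting $\Om_t$. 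Subtracting yields the martingale difference property.

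The second step is to deduce almost sure convergence of the Cesàro average $T^{-1}\sum_{t=0}^{T-1} Z_t \to 0$. The cleanest route is Azuma-Hoeffding: for any fixed $\varepsilon > 0$ one obtains
\[
    \Pb\Bigl[\bigl|T^{-1}\textstyle\sum_{t=0}^{T-1} Z_t\bigr| \geq \varepsilon\Bigr] \leq 2\exp(-T\varepsilon^2/2),
\]
which is summable in $T$, so Borel--Cantelli delivers the desired almost-sure limit along the integers, and a standard $\varepsilon \downarrow 0$ exhaustion (over a countable sequence $\varepsilon_k = 1/k$) removes the dependence on $\varepsilon$. A martingale SLLN such as Chow's theorem (using $\sum_t t^{-2}\E[Z_t^2] \leq \sum_t t^{-2} < \infty$) is an equivalent alternative.

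The main obstacle I anticipate is not the convergence argument itself but the bookkeeping around filtration and measurability: carefully specifying the product space carrying all $\{(\de_t,\Om_t)\}_{t \geq 0}$ and justifying that the conditional expectations $\E[\,\cdot\,|\,\CF_{t-1}]$ and $\E^{D-1}[\,\cdot\,|\,\CF_{t-1}]$ coincide in the sense used above. Since the paper explicitly adopts the convention that every relevant subset is measurable, this step reduces to a routine verification rather than a substantive difficulty.
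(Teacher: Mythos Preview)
Your proposal is correct and follows essentially the same approach as the paper: define $Z_t$, verify it is a bounded martingale difference sequence via \eqref{Equ:BinVar} and the tower property, then invoke a martingale SLLN. The paper carries out the last step by applying \cite[Thm.\,2.17]{HallHeyde:1980} (using $\sum_t (t+1)^{-2}\E^{D}[Z_t^2\mid\CF_{t-1}]<\infty$) followed by Kronecker's Lemma, which is precisely your ``Chow's theorem'' alternative; your Azuma--Hoeffding route is an equally valid and slightly more self-contained variant.
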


\begin{proof}
For any $t\in\BN$, define $Z_{t}:=M_{t}-\E^{D-1}\bigl[\Vm_{t}\big|\CF_{t-1}\bigr]$ and observe that
\begin{align}\label{Equ:AsymProb1}
    &\E^{D}\bigl[Z_{t}\big|\CF_{t-1}\bigr]\\
    &\,\,\,=\E^{D}\bigl[M_{t}\big|\CF_{t-1}\bigr]-\E^{D}\bigl[\E^{D-1}\bigl[\Vm_{t}\big|
                                                      \CF_{t-1}\bigr]\big|\CF_{t-1}\bigr]\nonumber\\
    &\,\,\,=\E^{D}\bigl[M_{t}\big|\CF_{t-1}\bigr]-\E^{D-1}\bigl[\Vm_{t}\big|\CF_{t-1}\bigr]
                                                                                         \nonumber\\
    &\,\,\,=0\ec
\end{align}
by virtue of \eqref{Equ:BinVar}. In probabilistic terms, this says that $\{Z_{t}\}_{t\in\BN}$ is a sequence of martingale differences. Moreover, 
\begin{equation}\label{Equ:AsymProb2}
    \sum_{t=0}^{\infty}\frac{1}{(t+1)^2}\E^{D}\bigl[Z_{t}^{2}\big|\CF_{t-1}\bigr]<\infty
\end{equation}
almost surely, because $|Z_{t}|\leq 1$ is bounded for $t\in\BN$. Therefore \cite[Thm.\,2.17]{HallHeyde:1980} can be applied, which yields that
\begin{equation}\label{Equ:AsymProb2}
    \sum_{t=0}^{T-1}\frac{1}{t+1}Z_{t}
\end{equation}
converges almost surely as $T\to\infty$. The result \eqref{Equ:AsymProb} now follows by use of Kronecker's Lemma, \cite[p.\,31]{HallHeyde:1980}.
\end{proof}

Note that Lemma \ref{The:AsymProb} does not imply that
\begin{equation}\label{Equ:AsymProb3}
    \lim_{T\to\infty}\frac{1}{T}\sum_{t=0}^{T-1}M_{t}=
    \lim_{T\to\infty}\frac{1}{T}\sum_{t=0}^{T-1}\E^{D-1}\bigl[\Vm_{t}\big|\CF_{t-1}\bigr]
\end{equation}
almost surely, because it is not clear that the right-hand side converges almost surely. However, if it converges almost surely, then \eqref{Equ:AsymProb3} holds. 

\begin{theorem}\label{The:ConvSure}
    Let Assumptions \ref{Ass:Uncertainty}, \ref{Ass:Control}, \ref{Ass:Resolvability} hold and
    $(K,R)$ be an admissible sample-removal pair. Then
    \vspace*{-0.3cm}
	\begin{equation}\label{Equ:ConvSure}
		\underset{T\to\infty}{\lims}\,\frac{1}{T}\sum_{t=0}^{T-1}M_{t}\leq\ep\vspace*{-0.3cm}
	\end{equation}
	almost surely.
\end{theorem}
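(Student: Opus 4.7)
The plan is to combine Lemma \ref{The:AsymProb} with the pointwise bound on the conditional expected violation probability that was already exploited in the proof of Theorem \ref{The:ConvExp}. The key observation is that admissibility of the sample-removal pair $(K,R)$ means, by Definition \ref{Def:AdmPair} together with \eqref{Equ:NumInt}, that $\E^{D-1}\bigl[\Vm_{t}\big|\CF_{t-1}\bigr]\leq\ep$ holds pointwise almost surely for every $t$, since conditioning on $\CF_{t-1}$ fixes $x_{t}$, and the bound from Lemma \ref{The:DistrBound} applies to any fixed initial state.

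The decomposition I would use is
\begin{equation*}
	\frac{1}{T}\sum_{t=0}^{T-1}M_{t}
	=\frac{1}{T}\sum_{t=0}^{T-1}\Bigl(M_{t}-\E^{D-1}\bigl[\Vm_{t}\big|\CF_{t-1}\bigr]\Bigr)
	+\frac{1}{T}\sum_{t=0}^{T-1}\E^{D-1}\bigl[\Vm_{t}\big|\CF_{t-1}\bigr]\ef
\end{equation*}
Lemma \ref{The:AsymProb} states exactly that the first summand converges to $0$ almost surely as $T\to\infty$. For the second summand, since each term is pointwise bounded above by $\ep$ almost surely, the Cesàro average is also bounded above by $\ep$ almost surely for every $T$, and hence $\limsup_{T\to\infty}$ of the second summand is $\leq\ep$ almost surely.

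Combining the two summands, $\limsup_{T\to\infty}\frac{1}{T}\sum_{t=0}^{T-1}M_{t}\leq 0+\ep=\ep$ almost surely, which is \eqref{Equ:ConvSure}. Strictly speaking I should be slightly careful about null sets: Lemma \ref{The:AsymProb} holds off a null set $N_{1}$, and the pointwise bound $\E^{D-1}[\Vm_{t}|\CF_{t-1}]\leq\ep$ fails on at most a countable union of null sets $N_{2}=\bigcup_{t\in\BN}N_{2,t}$, so the conclusion holds off $N_{1}\cup N_{2}$, still a null set.

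I do not anticipate a serious obstacle: the whole argument is a one-line corollary of Lemma \ref{The:AsymProb} once the admissibility bound is invoked. The only mildly delicate point is confirming that the conditional expectation $\E^{D-1}[\Vm_{t}|\CF_{t-1}]$ is indeed uniformly $\leq\ep$ almost surely. This follows because $\CF_{t-1}$ determines $x_{t}$, and Lemma \ref{The:DistrBound} (hence Definition \ref{Def:AdmPair}) gives the bound for any fixed value of the conditioning state, so by a standard regular-conditional-probability argument the bound transfers to the conditional expectation.
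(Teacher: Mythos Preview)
Your proposal is correct and follows essentially the same approach as the paper: decompose the empirical average of $M_{t}$ into the martingale-difference part handled by Lemma \ref{The:AsymProb} and the conditional-expectation part bounded via admissibility (Definition \ref{Def:AdmPair}), then take $\limsup$. The paper compresses this into a single chain of inequalities, but the mathematical content is identical; your extra care about null sets and the regular-conditional-probability justification for the pointwise bound is a welcome elaboration that the paper leaves implicit.
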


\begin{proof}
    From Lemma \ref{The:AsymProb},
    \begin{align}\label{Equ:ConvSure1}
		0=&\lim_{T\to\infty}\frac{1}{T}\sum_{t=0}^{T-1}
		\Bigl(M_{t}-\E^{D-1}\bigl[\Vm_{t}\big|\CF_{t-1}\bigr]\Bigr)\nonumber\\
		\geq&\underset{T\to\infty}{\lims}\,\frac{1}{T}\sum_{t=0}^{T-1}\bigl(M_t-\ep\bigr)\nonumber\\
		=&\underset{T\to\infty}{\lims}\,\frac{1}{T}\sum_{t=0}^{T-1}M_{t}-\ep
	\end{align}
	almost surely, where the second line follows from Definition \ref{Def:AdmPair}.
\end{proof}

\section{Numerical Example}\label{Sec:Exam}

\subsection{System Data}

Consider the stochastic linear system
\begin{equation*}\label{Equ:ExSystem}
	x_{t+1}=
	\begin{bmatrix}
		0.7 &-0.1(2+\theta_t) \\ -0.1(3+2\theta_t) &0.9
	\end{bmatrix}
	x_{t}+
	\begin{bmatrix}
		1 &0 \\ 0 &1
	\end{bmatrix}
	u_{t}+
	\begin{bmatrix}
		w_{t}^{(1)}\\
		w_{t}^{(2)}
	\end{bmatrix},
\end{equation*}
where $x_{0}=[1\; 1]\tp$. Here $\theta_{t}\sim\CU\bigl([0,1]\bigr)$ is uniformly distributed on the interval $[0,1]$ and $w_{t}^{(1)},w_{t}^{(2)}\sim\CN(0,0.1)$ are normally distributed with mean $0$ and variance $0.1$. The inputs are confined to
\begin{equation*}
    \BU:=\bigl\{\upsilon\in\BR^{2}\,\big|\,|\upsilon^{(1)}|\leq 5\,\wedge\,|\upsilon^{(2)}|\leq 5\bigr\},
\end{equation*}
and two state constraints are considered:
\begin{equation*}
    \BX_{1}:=\bigl\{\xi\in\BR^{2}\,\big|\,\xi^{(1)}\geq 1\bigr\}\ec\enspace
    \BX_{2}:=\bigl\{\xi\in\BR^{2}\,\big|\,\xi^{(2)}\geq 1\bigr\},
\end{equation*}
either individually or in combination $\BX:=\BX_{1}\cap\BX_{2}$. The stage cost function is chosen to be of the quadratic form (\ref{Equ:StageCost}c), with the weights $Q_\ell:=I$ and $R_\ell:=I$. The MPC horizon is set to $N:=5$.

\subsection{Joint Chance Constraint}

The support rank of the joint chance constraint $\BX$ is bounded by $\rho_{1}=2$. Figure \ref{Fig:PhaseSingle} depicts a phase plot of the closed-loop system trajectory, for two admissible sample-removal pairs (a) $(19,0)$ and (b) $(1295,100)$, corresponding to $\ep=10\%$. Instances in which the state trajectory leaves $\BX$ are indicated in red. Note that the distributions are centered around a similar mean in both cases, however the case $R=0$ features stronger outliers than  $R=100$.

\begin{figure}[h]
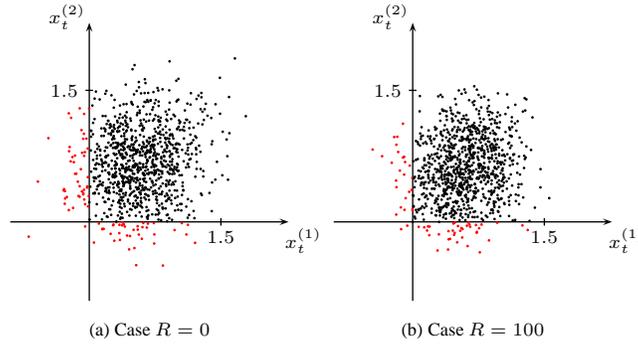

	\vspace*{0.3cm}
	\centering
    \parbox[c][40mm]{83mm}{\centering
    \begin{pspicture}(0,0)(83,40)
        \definecolor{purple}{cmyk}{0.45,0.55,0,0}
        \scriptsize
        \mel
        \rput[tc](18.5,-3){(a) Case $R=0$}\rput[tc](61.5,-3){(b) Case $R=100$}
        \psline{->}(10.5,1)(10.5,38)\psline{-}(28,11)(28,12)
        \psline{->}(53.5,1)(53.5,38)\psline{-}(71,11)(71,12)
        \rput[tc](28,9.5){$1.5$}\rput[tc](71,9.5){$1.5$} 
        \rput[tr](10,40.5){$x_{t}^{(2)}$}\rput[tr](53,40.5){$x_{t}^{(2)}$}
        \psline{->}(0,11.5)(37,11.5)\psline{-}(10,29)(11,29)
        \psline{->}(43,11.5)(80,11.5)\psline{-}(53,29)(54,29)
        \rput[mr](9,29){$1.5$}\rput[mr](52,29){$1.5$} 
        \rput[lc](39,9){$x_{t}^{(1)}$}\rput[lc](82,9){$x_{t}^{(1)}$}
        \input{./SR000vio.tex}
        \input{./SR000sat.tex}
        \input{./SR100vio.tex}
        \input{./SR100sat.tex}
    \end{pspicture}}
    \vspace*{0.3cm}
    \caption{Phase plot of closed-loop system trajectory (red: violating states; black: other
    states). The axis lines mark the boundary of the feasible set $\BX$.\label{Fig:PhaseSingle}}
\end{figure}

Table \ref{Tab:Joint} shows the empirical results of a simulation of the closed-loop system over $T=10,000$ time steps. Note that there is essentially no conservatism in the case of no removals ($R=0$). Some minor conservatism is present for small removal sizes, disappearing asymptotically as $R\to\infty$. At the same time, the reduction of the average closed-loop cost $\ell_{\textrm{avg}}$ is minor for this example, while the standard deviation $\ell_{\textrm{std}}$ is affected significantly.

\renewcommand\arraystretch{1.5}
\begin{table}[H]
	\begin{center}
    \begin{tabular}{c||rrrr}
       $\ep=10\%$ & $R=0$ & $R=50$ & $R=100$ & $R=500$\\ \hline\hline
       K & $19$ & $702$ & $1,295$ & $5,723$ \\
       $V_{\textrm{avg}}$ & $9.87\%$ & $7.37\%$ & $8.06\%$ & $8.74\%$ \\
       $\ell_{\textrm{avg}}$ & 3.78  & 3.75 & 3.72 & 3.68  \\
       $\ell_{\textrm{std}}$ & 0.54 & 0.44 & 0.42 & 0.37
    \end{tabular}
    \end{center}
	\vspace*{0.1cm}
    \caption{Joint chance constraint: closed-loop results for mean violations $V_{\textrm{avg}}$,
    mean stage cost $\ell_{\textrm{avg}}$, and standard deviation of stage costs
    $\ell_{\textrm{std}}$.\label{Tab:Joint}}
\end{table}
\renewcommand\arraystretch{1.0}

To highlight the impact of the presented SCMPC approach, the results of Table \ref{Tab:Joint} can be compared to those of previous SCMPC approaches \cite{Schildi:2012,CalFag:2013a}. The sample size is $19$ (compared to about $400$), and the empirical share of constraint violations in closed-loop is $9.87\%$ (compared to about $0.05\%$). These figures become even worse when longer horizons are considered; \eg for $N=20$, previous approaches require about 900 samples and yield about $0.2\%$ violations.

\subsection{Individual Chance Constraints}

For the same example, the two chance constraints $\BX_{1}$ and $\BX_{2}$ are now considered separately, with the individual probability levels $\ep_{1}=5\%$ and $\ep_{2}=10\%$. Each support rank is bounded by $\rho_{1}=1$. Figure \ref{Fig:PhaseDouble} depicts a phase plot of the closed-loop system trajectory, for the admissible sample-removal pairs (a) $(19,0)$, $(9,0)$ and (b) $(2020,100)$, $(1010,100)$.

\begin{figure}[h]
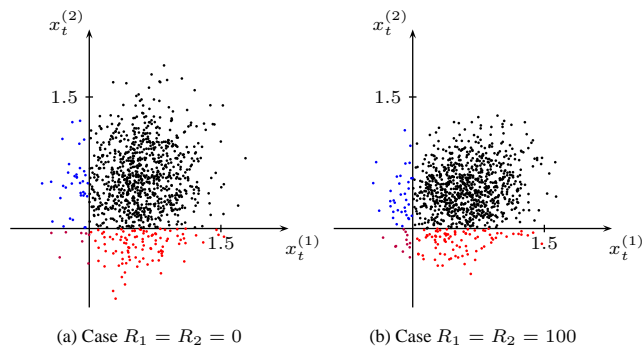

	\vspace*{0.3cm}
	\centering
    \parbox[c][40mm]{83mm}{\centering
    \begin{pspicture}(0,0)(83,40)
        \scriptsize
        \mel
        \rput[tc](18.5,-3){(a) Case $R_1=R_2=0$}\rput[tc](61.5,-3){(b) Case $R_1=R_2=100$}
        \psline{->}(10.5,1)(10.5,38)\psline{-}(28,11)(28,12)
        \psline{->}(53.5,1)(53.5,38)\psline{-}(71,11)(71,12)
        \rput[tc](28,9.5){$1.5$}\rput[tc](71,9.5){$1.5$} 
        \rput[tr](10,40.5){$x_{t}^{(2)}$}\rput[tr](53,40.5){$x_{t}^{(2)}$}
        \psline{->}(0,11.5)(37,11.5)\psline{-}(10,29)(11,29)
        \psline{->}(43,11.5)(80,11.5)\psline{-}(53,29)(54,29)
        \rput[mr](9,29){$1.5$}\rput[mr](52,29){$1.5$} 
        \rput[lc](39,9){$x_{t}^{(1)}$}\rput[lc](82,9){$x_{t}^{(1)}$}
        \input{./DR000vio1.tex}
        \input{./DR000vio2.tex}
        \input{./DR000vio12.tex}
        \input{./DR000sat.tex}
        \input{./DR100vio1.tex}
        \input{./DR100vio2.tex}
        \input{./DR100vio12.tex}
        \input{./DR100sat.tex}
    \end{pspicture}}
    \vspace*{0.3cm}
    \caption{Phase plot of closed-loop system trajectory (blue, red, purple: violating states of 
    $\BX_1$, $\BX_2$, $\BX_1$ and $\BX_2$; black: other states). The axis lines mark the boundaries
    of the feasible sets $\BX_1$ and $\BX_2$, respectively.\label{Fig:PhaseDouble}}
\end{figure}

Table \ref{Tab:Ind} shows the empirical results of a simulation of the closed-loop system over $T=10,000$ time steps. Note that there is very little conservatism in all cases. As in the previous example, the reduction of the average closed-loop cost $\ell_{\textrm{avg}}$ is minor, while the standard deviation $\ell_{\textrm{std}}$ is affected significantly.

\renewcommand\arraystretch{1.5}
\begin{table}[H]
	\begin{center}
    \begin{tabular}{c||rrr}
       $\ep_{1}=5\%,$ & $R_1=R_2$ & $R_1=R_2$ & $R_1=R_2$ \\
       $\ep_{2}=10\%$ & $=0$ & $=50$ & $=100$ \\ \hline\hline
       $K_1$ & $19$ & $1,020$ & $2,020$ \\
       $K_2$ & $9$ & $510$ & $1,010$ \\
       $V_{\textrm{avg},1}$ & $5.14\%$ & $4.84\%$ & $4.95\%$  \\
       $V_{\textrm{avg},2}$ & $9.94\%$ & $9.81\%$ & $9.93\%$  \\
       $\ell_{\textrm{avg}}$ & 3.67  & 3.62  & 3.51   \\
       $\ell_{\textrm{std}}$ & 0.54  & 0.46  & 0.42
    \end{tabular}
    \end{center}
	\vspace*{0.1cm}
    \caption{Single chance constraint: closed-loop results for mean violations $V_{\textrm{avg},1}$
    and $V_{\textrm{avg},2}$ of $\BX_{1}$ and $\BX_{2}$, mean stage cost $\ell_{\textrm{avg}}$, and
    standard deviation of stage costs $\ell_{\textrm{std}}$.
	\label{Tab:Ind}}
\end{table}
\renewcommand\arraystretch{1.0}

\section{Conclusion}\label{Sec:Conc}

The paper has presented new results on Scenario-Based Model Predictive Control (SCMPC). By focusing on the average-in-time probability of constraint violations and by exploiting the multi-stage structure of the finite-horizon optimal control problem (FHOCP), the number of scenarios has been greatly reduced compared to previous approaches. Moreover, the possibility to adopt a-posteriori constraint removal strategies is also accommodated. Due to its computational efficiency, the presented approach paves the way for a tractable application of Stochastic Model Predictive Control (SMPC) to large-scale problems with hundreds of decision variables.

\section*{Acknowledgements}

Research leading to these results has received funding from the European Union Seventh
Framework Programme FP7/2007-2013 under grant agreement number FP7-ICT-2009-4 248940, and
under grant agreement number PIOF-GA-2009-252284. Christoph Frei gratefully acknowledges
financial support by the Natural Sciences and Engineering Research Council of Canada.

\begin{appendix}
\section{Proof of Lemmas \ref{The:AddDist} and \ref{The:FirstStep}}\label{Sec:LemProof}

The particular bounding arguments follow rather easily after some general observations on the support rank. Pick any state constraint $i\in\{1,...,N\}$ from (\ref{Equ:FHSCP}c). Recursively substituting the dynamics (\ref{Equ:FHSCP}b), the constrained state can be expressed as
\begin{subequations}
	\begin{align}\label{Equ:StateCon}
	& x^{(k)}_{i|t}=\bigl(A^{(k)}_{i|t}\cdot ...\cdot A^{(k)}_{0|t}\bigr)x_{t}
    +\Ab^{(k)}_{i|t}\Bb^{(k)}_{i|t}
    \begin{bmatrix}
        u_{0|t}\\
        \vdots\\
        u_{N-1|t}
    \end{bmatrix}
    +\Ab^{(k)}_{i|t}
    \begin{bmatrix}
        w^{(k)}_{0|t}\\
        \vdots\\
        w^{(k)}_{i-1|t}
    \end{bmatrix},\\
    &\Ab^{(k)}_{i|t}:=
    \begin{bmatrix}
        A^{(k)}_{i|t}\cdot ...\cdot A^{(k)}_{1|t}\\
        \vdots\\ A^{(k)}_{1|t}\\ \Id
    \end{bmatrix}\tp,\\
    &\Bb^{(k)}_{i|t}:=
    \begin{bmatrix}
        B^{(k)}_{0|t} & 0 &\hdots &0 &0 &\hdots &0\\
        0 & B^{(k)}_{1|t} &\hdots &0 &0 &\hdots &0\\
        \vdots &\vdots &\ddots &\vdots &0 &\hdots &0\\
        0 & 0 &\hdots &B^{(k)}_{i|t} &0 &\hdots &0\\
    \end{bmatrix},
\end{align}\end{subequations}
where $I\in\BR^{n\times n}$ denotes the identity matrix, and for any $i=0,...,N-1$ the following abbreviations are used:
\begin{equation*}\label{Equ:AbbrVar}
    A^{(k)}_{i|t}:=A\bigl(\de^{(k)}_{i|t}\bigr),\enspace
    B^{(k)}_{i|t}:=B\bigl(\de^{(k)}_{i|t}\bigr),\enspace
    w^{(k)}_{i|t}:=w\bigl(\de^{(k)}_{i|t}\bigr)\ef
\end{equation*}
Let $l\leq n$ be the support rank of $\BX$, \ie the co-dimension of the largest linear subspace that is unconstrained by $\BX$. Then there exists a projection matrix $P\in\BR^{l\times n}$ such that for each $x\in\BRn$
\begin{equation*}
    x\in\BX\quad\Longleftrightarrow\quad Px\in P\BX:=\bigl\{P\xi\:\big|\:\xi\in\BX\bigr\}\ef
\end{equation*}
For example, if the state constraint concerns only the first two elements of the state vector, then $l=2$ and $P\in\BR^{2\times n}$ may contain the first two unit vectors $e_{1},e_{2}\in\BRn$ as its rows.

\subsection*{Proof of Lemma \ref{The:AddDist}}

If $A\bigl(\de^{(k)}_{i|t}\bigr)\equiv A$ and $B\bigl(\de^{(k)}_{i|t}\bigr)\equiv B$ are constant for all $i\in\{0,...,N-1\}$, then \eqref{Equ:StateCon} reduces to
\begin{equation}\label{Equ:AddDist}
    \underbrace{
    \begin{bmatrix}
        PA^{i-1}B &\hdots & P & 0 &\hdots
    \end{bmatrix}}_{\rnk(\cdot)\,\leq l}
    \begin{bmatrix}
        u_{0|t}\\
        \vdots\\
        u_{N-1|t}
    \end{bmatrix}
    +PA^{i}x_{t}+
    \begin{bmatrix}
        PA^{i-1}B &\hdots & P
    \end{bmatrix}
    \begin{bmatrix}
        w^{(k)}_{0|t}\\
        \vdots\\
        w^{(k)}_{i-1|t}
    \end{bmatrix}\in P\BX\ec
\end{equation}
for any $i\in\{1,...,N\}$. The rank of the first matrix of dimension $l\times Nm$ can be at most $l$, and therefore it has a null space of dimension at least $Nm-l$. The disturbance has no effect on this null space, because it enters only through the third, additive term in \eqref{Equ:AddDist}. Hence this null space is clearly an unconstrained subspace of the constraint and $\rho_{i}\leq l\leq n$ for all $i\in\{1,...,N\}$, proving Lemma \ref{The:AddDist}.

\subsection*{Proof of Lemma \ref{The:FirstStep}}

Consider the first state constraint $i=1$ of (\ref{Equ:FHSCP}c). Here \eqref{Equ:StateCon} reduces to
\begin{equation}\label{Equ:FirstState}
    \underbrace{
    \begin{bmatrix}
    	P\Bb^{(k)}_{0|t} &0 &\hdots &0
	\end{bmatrix}}_{\rnk(\cdot)\,\leq m}
	\begin{bmatrix}
        u_{0|t}\\
        \vdots\\
        u_{N-1|t}
    \end{bmatrix}
    +PA^{(k)}_{0|t}x_{t}+
    Pw^{(k)}_{0|t}\in P\BX\ef
\end{equation}
The rank of the first matrix can here be at most $m$ for all outcomes of $\Bb^{(k)}_{0|t}$, because the last $(N-1)m$ variables in the decision vector are always in its null space. Hence $\rho_{1}\leq m$ in all cases, proving Lemma \ref{The:FirstStep}.

\subsection*{Parameterized Control Laws}

For the case of parameterized control laws as in Remark \ref{Rem:ContrParam}, it will be shown that the argument of Lemma \ref{The:FirstStep} continues to apply. Define for any $i=1,...,N-1$
\begin{align*}
	&Q_{0|t}:=\Id\ec\qquad &\Phi_{0|t}:=\phi_{0|t}\ec\quad\,\,\,\,\\
	&Q_{i|t}^{(k)}:=
	\underbrace{
	\begin{bmatrix}
		q_{i|t}^{(1)} & q_{i|t}^{(2)} & \hdots & q_{i|t}^{(J_{i})} 
	\end{bmatrix}}_{\in\BR^{m\times J_{i}}}\ec\quad
	&\Phi_{i|t}:=
	\underbrace{
	\begin{bmatrix}
		\phi_{i|t}^{(1)}\\
		\vdots\\
		\phi_{i|t}^{(J_{i})}\\
	\end{bmatrix}}_{\in\BR^{J_{i}}}\ec
\end{align*}
where $q_{i|t}^{(j)}:=q_{i|t}^{(j)}\bigl(\de^{(k)}_{0|t},...,\de^{(k)}_{i|t}\bigr)$ is used as an abbreviation and $\Id\in\BR^{m\times m}$ denotes the identity matrix. Then the vector of control inputs under scenario $k=1,...,K$ can be expressed as the matrix-vector product
\begin{equation*}
	\begin{bmatrix}
        u_{0|t}\\
        u_{1|t}^{(k)}\\
        \vdots\\
        u_{N-1|t}^{(k)}
    \end{bmatrix}
    =
    \underbrace{
    \begin{bmatrix}
        Q_{0|t}    & 0             & \hdots & 0 \\
        0          & Q_{1|t}^{(k)} & \hdots & 0 \\
        \vdots     & \vdots        & \ddots & \vdots \\
        0          & 0             & \hdots & Q_{N-1|t}^{(k)}
    \end{bmatrix}}_{=:\Qb_{t}^{(k)}}
    \underbrace{
    \begin{bmatrix}
    	\Phi_{0|t}\\
        \Phi_{1|t} \\
        \vdots \\
        \Phi_{N-1|t}       
    \end{bmatrix}}_{=:\Phib_{t}}\ef
\end{equation*}
Substitute this, in place of the original decision vector in \eqref{Equ:FirstState} to see that the same rank argument as before applies.

\end{appendix}

\newpage
\bibliographystyle{plain}
\bibliography{contrbib,engbib,mathbib}

\end{document}